\newtheorem{theorem}{Theorem}[section]
\newtheorem{lemma}{Lemma}[section]
\newtheorem*{claim*}{Claim}
\newtheorem*{proofclaim*}{Proof of Claim}
\theoremstyle{theorem}
\newtheorem*{Exercise 11.3.B}{Exercise 11.3.B}
\numberwithin{equation}{section}
\begin{document}
	\title[Identities of the product of Dirichlet Series ]
	{Identities for the product of Two Dirichlet Series Satisfying Hecke's Functional Equation}
	\author{Bruce C.~Berndt, Likun Xie}
	\address{Department of Mathematics, University of Illinois, 1409 West Green
		Street, Urbana, IL 61801, USA} \email{berndt@illinois.edu}
	\address{Department of Mathematics, University of Illinois, 1409 West Green
		Street, Urbana, IL 61801, USA}
	\email{likunx2@illinois.edu}

	\begin{abstract}
		We derive a general formula for the product of two Dirichlet series that satisfy Hecke's functional equation. Several  examples are provided to demonstrate the applicability of the formula.  In addition, we discuss prior work on similar products and clarify certain issues arising in the existing literature.
	\end{abstract}
	\maketitle	
	
	\section{Introduction}
	
	In their study of the zeros of the Riemann zeta function \(\zeta(s)\), Hardy and Littlewood \cite{HL1, HL2, HL3} developed approximate functional equations for \(\zeta(s)\) and \(\zeta^2(s)\). Throughout this paper, let \(s = \sigma + it\) and (where relevant) \(s' = \sigma' + it'\), with \(\sigma, \sigma', t\), and \(t'\) being real. As customary, define
	\[
	\chi(s) := 2(2\pi)^{s-1}\sin\left(\tfrac{1}{2} \pi s\right)\Gamma(1-s),
	\]
	where \(\Gamma(s)\) denotes the gamma function.
	
	\begin{theorem}\label{thm1} \textup{\textbf{(Approximate Functional Equation for $\zeta(s)$)}}
		Let \(x\), \(y\), \(h\), and \(k\) be positive real numbers such that
		\[
		2\pi xy = |t|, \quad x > h > 0, \quad y > k > 0, \quad -k < \sigma < k.
		\]
		Then, as \(|t| \to \infty\),
		\[
		\zeta(s) = \sum_{n \leq x} n^{-s} + \chi(s)\sum_{n \leq y} n^{s-1} + O(x^{-s}) + O(|t|^{1/2-\sigma}y^{\sigma-1}).
		\]
	\end{theorem}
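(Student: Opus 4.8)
The plan is to split off the first sum by an elementary summation formula and then reduce the remainder to a stationary-phase estimate. First I would invoke the standard Euler--Maclaurin (partial-summation) identity: for $\sigma > -1$, $s \neq 1$, and any real $x \geq 1$,
\[
\zeta(s) = \sum_{n \leq x} n^{-s} + \frac{x^{1-s}}{s-1} + \left(\{x\} - \tfrac12\right)x^{-s} - s\int_x^\infty \frac{\{u\} - \tfrac12}{u^{s+1}}\,du ,
\]
obtained from $\sum_{n\le x} n^{-s}$ by Stieltjes integration and analytic continuation from $\sigma > 1$. Since the hypotheses force $|t| = 2\pi xy \geq 2\pi k x$, the term $x^{1-s}/(s-1)$ has modulus $x^{1-\sigma}/|s-1| \ll x^{1-\sigma}/|t| = x^{-\sigma}/(2\pi y) = O(x^{-\sigma})$, and the bracketed term is trivially $O(x^{-\sigma}) = O(x^{-s})$. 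Replacing $s$ by $\bar s$ if necessary, I may assume $t > 0$. Everything then comes down to proving
\[
-s\int_x^\infty \frac{\{u\} - \tfrac12}{u^{s+1}}\,du = \chi(s)\sum_{n \leq y} n^{s-1} + O\!\left(|t|^{1/2 - \sigma} y^{\sigma - 1}\right).
\]

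Next I would substitute the Fourier expansion $\{u\} - \tfrac12 = -\tfrac{1}{\pi}\sum_{m \geq 1} m^{-1}\sin(2\pi m u)$ — the interchange with $\int_x^\infty$ being legitimate because the partial sums of this series are uniformly bounded — to reach the oscillatory integrals $\int_x^\infty u^{-s-1} e^{\pm 2\pi i m u}\,du$. Writing $u^{-s-1} = u^{-\sigma-1} e^{-i t \log u}$, the phase $2\pi m u - t\log u$ of the $e^{+2\pi i m u}$ integral has a stationary point at $u_0 = t/(2\pi m)$, which lies in $[x,\infty)$ precisely when $m \leq t/(2\pi x) = y$; the $e^{-2\pi i m u}$ integrals have phase derivative $-2\pi m - t/u$ of size $\geq 2\pi m$, so after one integration by parts and summation over $m$ they contribute only to the error. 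For $m \leq y$ the saddle-point formula gives
\[
\int_x^\infty u^{-s-1} e^{2\pi i m u}\,du = (2\pi m)^s\, t^{-s}\left(\tfrac{2\pi}{t}\right)^{1/2} e^{i(t + \pi/4)} + (\text{remainder}),
\]
and multiplying by $\tfrac{s}{2\pi i m}$, summing over $m \leq y$, and comparing with the Stirling asymptotic $\chi(s) = \left(\tfrac{2\pi}{t}\right)^{s - 1/2} e^{i(t + \pi/4)}\bigl(1 + O(1/t)\bigr)$ reproduces $\chi(s)\sum_{m \leq y} m^{s-1}$ together with admissible errors: the leftover scalar is $\tfrac{s}{it} = 1 + O(1/t)$ and $|\chi(s)| \asymp |t|^{1/2 - \sigma}$. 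It is here that the precise form $\chi(s) = 2(2\pi)^{s-1}\sin(\tfrac12\pi s)\Gamma(1-s)$ enters, through its uniform asymptotics in the strip $|\sigma| < k$.

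The rest is error bookkeeping, and this is where I expect the real work to lie: one must show that the saddle-point remainders, the full $e^{-2\pi i m u}$ sum, the terms $m > y$, and the tail of the Fourier series together contribute $O(|t|^{1/2 - \sigma} y^{\sigma - 1})$ uniformly for $x > h$, $y > k$, $|\sigma| < k$. The most delicate range is $m$ of size comparable to $y$: there the stationary point $u_0$ is close to the endpoint $x$, so the naive first-derivative bound $1/|\phi'|$ degenerates and one needs second-derivative (van der Corput) estimates, or a truncated stationary-phase expansion with explicit endpoint corrections; summing these over a window of length $\asymp \sqrt{y}$ about $m = y$ is exactly what produces the exponents in $|t|^{1/2 - \sigma} y^{\sigma - 1}$.

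An alternative route I would keep in reserve is to bypass the Fourier expansion entirely: start from a Riemann--Siegel-type contour integral for $\sum_{n > x} n^{-s}$ (obtained from $\int_0^\infty w^{s-1}/(e^w - 1)\,dw$ by rotating and splitting contours), and deform the contour across the relevant saddle. The main term $\chi(s)\sum_{n \leq y} n^{s-1}$ then drops out of a single residue-plus-saddle computation, but the accompanying error analysis is of comparable difficulty, so I would only fall back to it if the Fourier-series bookkeeping proves unwieldy.
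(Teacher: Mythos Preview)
The paper does not prove this theorem. Theorem~\ref{thm1} is quoted in the Introduction as a classical result of Hardy and Littlewood \cite{HL1,HL2,HL3}, purely as motivation; the paper's own contribution is Theorem~\ref{main_theorem} on products of Hecke-type Dirichlet series, and no argument for the approximate functional equation of $\zeta(s)$ appears anywhere in the text. So there is no ``paper's own proof'' to compare your proposal against.

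For what it is worth, your sketch is the standard route: Euler--Maclaurin to isolate $\sum_{n\le x}n^{-s}$, Fourier-expand the sawtooth, and treat the resulting oscillatory integrals by stationary phase, with van~der~Corput second-derivative bounds near $m\approx y$. This is essentially the Hardy--Littlewood argument as presented in Titchmarsh's monograph, and the places you flag as delicate (endpoint behaviour when $u_0\approx x$, the window $|m-y|\ll\sqrt{y}$) are exactly where the uniform error analysis requires care. Nothing in your outline is wrong, but since the paper itself simply cites the result, your write-up would be supplying a proof the authors chose to omit rather than recovering one they gave.
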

	
	Theorem \ref{thm1} can be regarded as a "compromise" between the series representation for \(\zeta(s)\),
	\[
	\zeta(s) = \sum_{n=1}^{\infty} n^{-s}, \quad \sigma > 1,
	\]
	and the functional equation
	\[
	\zeta(s) = \chi(s)\zeta(1-s).
	\]
	
	Hardy and Littlewood's approximate functional equation for \(\zeta^2(s)\) \cite[Theorem 2]{HL3} is as follows:
	
	\begin{theorem}\textup{\textbf{(Approximate Functional Equation for $\zeta^2(s)$)}}
		If
		\[
		-\frac{1}{2} \leq \sigma \leq \frac{3}{2}, \quad x > A, \quad y > A, \quad xy = \left(\frac{t}{2\pi}\right)^2,
		\]
		then
		\[
		\zeta^2(s) = \sum_{n \leq x} \frac{d(n)}{n^s} + \chi(s) \sum_{n \leq y} \frac{d(n)}{n^{1-s}} + O\left(x^{1/2-\sigma}\left(\frac{x+y}{t}\right)^{1/2} \log t\right),
		\]
		where \(d(n)\) is the number of divisors of \(n\).
	\end{theorem}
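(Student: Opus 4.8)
The plan is to derive the formula from the classical truncated Voronoi summation formula for the divisor function. Assume $t>0$ (the case $t<0$ is symmetric) and begin in the half-plane $\sigma>1$, where $\zeta^2(s)=\sum_{n=1}^{\infty}d(n)n^{-s}$ converges absolutely. With $D(u)=\sum_{n\le u}d(n)$ (the usual convention at integers) we have $D(u)=u\log u+(2\gamma-1)u+\tfrac14+\Delta(u)$, where $\Delta$ is the error term in the Dirichlet divisor problem. Splitting the series at $n=x$ and summing the tail by parts gives
\[
\zeta^2(s)=\sum_{n\le x}\frac{d(n)}{n^s}-\frac{D(x)}{x^s}+s\int_x^{\infty}\frac{D(u)}{u^{s+1}}\,du.
\]
The smooth part $u\log u+(2\gamma-1)u+\tfrac14$ of $D(u)$ integrates in closed form, and together with the matching piece of $D(x)x^{-s}$ it produces only elementary terms --- rational functions of $s$ times $x^{1-s}\log x$, $x^{1-s}$, $x^{-s}$ --- of size $O(x^{1-\sigma}\log x/|t|)$, which by $xy=(t/2\pi)^2$ is dominated by the claimed error. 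So everything reduces to the oscillatory remainder $-\Delta(x)x^{-s}+s\int_x^{\infty}\Delta(u)u^{-s-1}\,du$, which I will show equals $\chi^2(s)\sum_{n\le y}d(n)n^{s-1}$ up to an admissible error.

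Into the remainder I would substitute the truncated Voronoi formula
\[
\Delta(u)=\frac{u^{1/4}}{\pi\sqrt2}\sum_{n\le M}\frac{d(n)}{n^{3/4}}\cos\!\left(4\pi\sqrt{nu}-\tfrac14\pi\right)+O\!\left(u^{1/2+\varepsilon}M^{-1/2}\right),
\]
with $M$ comparable to $y$. Writing the cosine in exponential form and folding $u^{-s}=u^{-\sigma}e^{-it\log u}$ into it, the $n$-th term reduces --- apart from a stationary-point-free companion integral --- to $\int_x^{\infty}u^{-3/4-\sigma}e^{i\phi_n(u)}\,du$ with $\phi_n(u)=4\pi\sqrt{nu}-t\log u$. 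Since $\phi_n'$ vanishes only at $u_n=t^2/(4\pi^2n)$, this integral has a stationary point in $[x,\infty)$ exactly when $u_n\ge x$, i.e.\ when $n\le(t/2\pi)^2/x=y$; this is precisely where the hypothesis $xy=(t/2\pi)^2$ enters, and it is what produces the truncation $n\le y$ in the reflected sum. Evaluating the terms with $n\le y$ by stationary phase and using Stirling's formula for the gamma-factor in $\chi$ recovers, for each such $n$, the reflected term $\chi^2(s)\,d(n)\,n^{s-1}$ --- the precise power of $\chi$, together with the accompanying powers of $2\pi$ and the factor $e^{it}$, being exactly what this bookkeeping pins down. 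Collecting everything one obtains $\zeta^2(s)=\sum_{n\le x}d(n)n^{-s}+\chi^2(s)\sum_{n\le y}d(n)n^{s-1}$ plus an error.

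That error is the sum of the non-stationary terms with $n>y$, the companion integrals of the opposite exponential sign, the Voronoi truncation error, the endpoint contributions at $u=x$ of the stationary-phase integrals, and the boundary term $\Delta(x)x^{-s}$ (to be paired against the last of these); one must show the total is $O\!\left(x^{1/2-\sigma}((x+y)/t)^{1/2}\log t\right)$, the factor $\log t$ coming from a weighted divisor sum $\sum_{n\le y}d(n)\ll y\log y$. To pass from $\sigma>1$ to the strip $-\tfrac12\le\sigma\le\tfrac32$, one carries the remainder throughout as an explicit convergent quantity --- the tail of the Voronoi series, or an absolutely convergent contour integral --- rather than merely as an $O$-term, so that the identity continues analytically, and the bound then holds uniformly as $|t|\to\infty$.

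The hardest step will be the error estimate in the transition band $n\asymp y$: there $u_n$ lies near the endpoint $x$, the first-derivative test degenerates, and one needs the sharper second-derivative (van der Corput) bounds --- or a careful saddle-point analysis that retains the endpoint --- to see that this band contributes no more than $O(x^{1/2-\sigma}((x+y)/t)^{1/2}\log t)$; the $x\leftrightarrow y$ symmetry of the target bound must be kept in view throughout. An alternative route leans on Theorem~\ref{thm1}: square it in its symmetric form $x_1=y_1=\sqrt{|t|/(2\pi)}$ and match the diagonal and off-diagonal parts of the resulting sums against the asserted identity. The diagonal of the cross term $2\chi(s)\bigl(\sum_{m\le x_1}m^{-s}\bigr)\bigl(\sum_{m\le y_1}m^{s-1}\bigr)$ contributes a quantity of order $|\chi(s)|\log t$, a ghost of the double pole of $\zeta^2$, which is again absorbed into the error, while its off-diagonal is an exponential sum for van der Corput's method; but controlling this cross term with the required uniformity is fussier than the Voronoi computation, which is why I would build the proof on the latter.
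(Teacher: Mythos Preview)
The paper does not prove this theorem. It appears in the Introduction as a classical result of Hardy and Littlewood, quoted from \cite[Theorem~2]{HL3} purely as historical motivation; the paper's own contribution begins only in Section~\ref{section_3} with Theorem~\ref{main_theorem}. There is therefore no proof in the paper to compare your proposal against.

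For what it is worth, your outline via the truncated Vorono\"{i} formula and stationary phase is essentially the classical route to this result, and the structure you describe (partial summation, isolation of the stationary-phase terms with $n\le y$, van der Corput estimates in the transition band) is the standard one. One point of friction: your computation naturally yields $\chi^2(s)$ in front of the reflected sum, consistent with the functional equation $\zeta^2(s)=\chi^2(s)\,\zeta^2(1-s)$, whereas the statement as transcribed in this paper carries only a single factor of $\chi(s)$; that discrepancy lies in the paper's quotation, not in your argument. Beyond that, your sketch is honest about where the real work lies (the endpoint analysis near $n\asymp y$ and the uniform analytic continuation to the full strip), and those are indeed the places where a complete proof would need to be filled in carefully.
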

	
	Since the aforementioned works of Hardy and Littlewood, approximate functional equations have been derived for other Dirichlet series satisfying similar functional equations. For instance, Apostol and Sklar \cite{apostol-sklar} established an analogue of Theorem \ref{thm1} for Hecke’s Dirichlet series. References for additional approximate functional equations can be found in \cite{apostol-sklar}.
	
	In 1930, Wilton \cite{wilton-2} derived an approximate functional equation for the product of two Riemann zeta functions \(\zeta(s)\) and \(\zeta(s')\), which might be considered a second "compromise."
	
	\begin{theorem} \cite{wilton-2}
		Let
		\[
		\sigma_k(n) := \sum_{d \mid n} d^k,
		\]
		and assume
		\[
		-\frac{1}{2} \leq \sigma \leq \frac{3}{2}, \quad -\frac{1}{2} \leq \sigma' \leq \frac{3}{2}, \quad \sigma' > \sigma - 1,
		\]
		\[
		\tau := \frac{|t|}{2\pi} \geq \tau' := \frac{|t'|}{2\pi} > A > 1,
		\]
		where \(A\) is a fixed constant. If
		\[
		\tau' > A\sqrt{\tau}, \quad \textup{when} \quad \sigma' \leq \sigma,
		\]
		and
		\[
		\tau' > A\tau^{(1-(\sigma'-\sigma))/2}, \quad \textup{when} \quad \sigma < \sigma' < 1+\sigma,
		\]
		then, as \(|t| \to \infty\),
		\[
		\zeta(s)\zeta(s') = \sum_{n \leq \tau} \frac{\sigma_{s-s'}(n)}{n^s} + \chi(s)\chi(s') \sum_{n \leq \tau'} \frac{\sigma_{s'-s}(n)}{n^{1-s}} + \mu\log\tau\left\{O(\tau^{1/2-\sigma}) + O(\tau^{1/2-\sigma'})\right\},
		\]
		where
		\[
		\mu := \min\left(|\sigma-\sigma'|^{-1}, \log\tau\right).
		\]
	\end{theorem}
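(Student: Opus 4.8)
The plan is to run the classical Hardy--Littlewood derivation of an approximate functional equation, adapted to this degree-two product. The starting point is the pair of Dirichlet series identities $\zeta(w)\zeta(w-a)=\sum_{n=1}^{\infty}\sigma_a(n)n^{-w}$ (valid for $\mathrm{Re}(w)$ large), which with $a=s-s'$ gives $\zeta(s)\zeta(s')=\sum_n\sigma_{s-s'}(n)n^{-s}$, and dually, after applying $\zeta(s)\zeta(s')=\chi(s)\chi(s')\zeta(1-s)\zeta(1-s')$, gives $\zeta(1-s)\zeta(1-s')=\sum_n\sigma_{s'-s}(n)n^{-(1-s)}$. So the two ``main terms'' in the theorem are exactly the truncations at $\tau$ and at $\tau'$ of these two series, and it remains to show that the tail $\sum_{n>\tau}\sigma_{s-s'}(n)n^{-s}$ coincides with $\chi(s)\chi(s')\sum_{n\le\tau'}\sigma_{s'-s}(n)n^{-(1-s)}$ up to the indicated error.

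First I would represent this tail by a contour integral. By Perron's formula (truncated, or in a smoothed form to lighten the truncation error), $\sum_{n\le\tau}\sigma_{s-s'}(n)n^{-s}=\frac{1}{2\pi i}\int_{(\alpha)}\zeta(s+w)\zeta(s'+w)\,\tau^{w}\,\frac{dw}{w}$ for a line $\mathrm{Re}(w)=\alpha$ to the right of absolute convergence; subtracting this from $\zeta(s)\zeta(s')$ and shifting the line leftward to $\mathrm{Re}(w)=-\delta$ picks up the residue of $1/w$ at $w=0$ (which is $\zeta(s)\zeta(s')$) together with the poles of $\zeta(s+w)$ at $w=1-s$ and of $\zeta(s'+w)$ at $w=1-s'$ (the latter two only when $\sigma<1$, resp.\ $\sigma'<1$). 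Those two residues have size $\ll\tau^{-\sigma}\,|\zeta(1+s'-s)|$ and $\ll\tau'^{-\sigma'}\,|\zeta(1+s-s')|$, and the hypotheses relating $\tau'$ to $\tau$ (namely $\tau'>A\sqrt{\tau}$ when $\sigma'\le\sigma$, and $\tau'>A\tau^{(1-(\sigma'-\sigma))/2}$ when $\sigma<\sigma'<1+\sigma$) are exactly what keeps them inside $\mu\log\tau\{O(\tau^{1/2-\sigma})+O(\tau^{1/2-\sigma'})\}$, with $\mu$ absorbing the degenerate range $s\approx s'$. One is left with the identity $\sum_{n>\tau}\sigma_{s-s'}(n)n^{-s}=-\frac{1}{2\pi i}\int_{(-\delta)}\zeta(s+w)\zeta(s'+w)\,\tau^{w}\,\frac{dw}{w}+(\text{absorbed terms})$, now valid throughout the stated $(\sigma,\sigma')$-range by analytic continuation.

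Next I would apply the functional equation to both factors, $\zeta(s+w)\zeta(s'+w)=\chi(s+w)\chi(s'+w)\zeta(1-s-w)\zeta(1-s'-w)$, and push the line far to the left, where $\zeta(1-s-w)\zeta(1-s'-w)=\sum_m\sigma_{s'-s}(m)m^{s-1}m^{w}$ converges (the poles of $\chi(s+w)$, $\chi(s'+w)$ crossed in this second shift again contribute only to the absorbed error). Integrating term by term reduces matters to the model integral $\frac{1}{2\pi i}\int\chi(s+w)\chi(s'+w)(m\tau)^{w}\,\frac{dw}{w}$ for each $m$. By Stirling, $\chi(s+w)\chi(s'+w)=\chi(s)\chi(s')(\tau\tau')^{-w}$ to leading order, plus a lower-order oscillatory correction from the next terms of the expansion; hence this integral is essentially $\chi(s)\chi(s')\cdot\frac{1}{2\pi i}\int (m/\tau')^{w}\,\frac{dw}{w}$, which equals $\chi(s)\chi(s')$ for $m<\tau'$ (shift right, across the pole at $w=0$) and is negligible for $m>\tau'$ (shift left). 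Summing over $m$ reproduces $\chi(s)\chi(s')\sum_{n\le\tau'}\sigma_{s'-s}(n)n^{-(1-s)}$, as required.

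I expect the real work to be in making the last step uniform. In the transition range $m\approx\tau'$ the contour of the model integral cannot be displaced cleanly in either direction, so one must carry out an honest saddle-point analysis near $w=0$, retaining the first correction terms in Stirling; then summing the resulting errors over $m$ --- where the divisor-type coefficients $\sigma_{s'-s}(m)$ and the near-pole of $\zeta(1\pm(s-s'))$ produce the $\log\tau$ and the factor $\mu$ --- is what pins down the precise error term and the precise conditions on $\tau'$ versus $\tau$. (A shorter, less elementary route would be to specialize the paper's general product formula for two Dirichlet series satisfying Hecke's functional equation to the case of $\zeta$.)
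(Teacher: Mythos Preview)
This theorem is not proved in the paper at all: it is quoted from Wilton's 1930 paper \cite{wilton-2} as background in the introduction, and the paper supplies no argument for it. The only information the paper gives about Wilton's own proof is that its ``key component'' is the exact identity \eqref{wilton} for $\zeta(s)\zeta(s')$ in terms of the two series of incomplete sine-integrals; Wilton then estimates those integrals to extract the approximate functional equation. So there is nothing in the paper to compare your sketch against.

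Two further remarks on your proposal itself. First, your closing parenthetical --- that one could instead specialize the paper's general product formula (Theorem~\ref{main_theorem}) --- is contradicted by the paper: the authors state explicitly at the end of Section~\ref{section_3} that, because term-wise differentiation of $G_{f,k}$ and $G_{g,k}$ is not permissible, they \emph{cannot} recover a formula of Wilton's shape from their identity, and can only ``hope'' it will be useful toward an approximate functional equation. Second, the outline you give (Perron, shift past the poles at $w=1-s$ and $w=1-s'$, functional equation, then a saddle-point/stationary-phase analysis of $\int\chi(s+w)\chi(s'+w)(m\tau)^w\,dw/w$) is a plausible Hardy--Littlewood route, but it is \emph{not} Wilton's route, which goes through the exact identity \eqref{wilton}; you correctly flag that the hard part is the uniform control in the transition range $m\approx\tau'$, and that is precisely the step that would need a full argument rather than a heuristic before the proposal could be called a proof.
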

	
	A key component in Wilton's derivation of this functional equation is the identity \cite[Equation (2.2)]{wilton-2}
	\begin{align}\label{wilton}
		&\zeta(s)\zeta(s') - \zeta(s+s'-1) \left(\frac{1}{s-1} + \frac{1}{s'-1}\right)\nonumber \\=&
		2(2\pi)^{s-1} \sum_{n=1}^\infty \sigma_{1-s-s'}(n)n^{s-1}s\int_{2\pi n}^\infty u^{-s-1}\sin(u)du \nonumber
		\\&+ 2(2\pi)^{s'-1} \sum_{n=1}^\infty \sigma_{1-s-s'}(n)n^{s'-1}s'\int_{2\pi n}^\infty u^{-s'-1}\sin(u)du.
	\end{align}
	
	Alternative proofs of \eqref{wilton} were attempted in \cite{nakajima} and \cite{linearized}, but both contain gaps. 
	In order to obtain analogous formulas for the product of two Hurwitz zeta functions \cite{hurwitz} and the product of two Dedekind zeta functions \cite{wilton}, similar mistakes were made.
	Furthermore, another paper \cite{banerjee_Piltz} employing a related approach involving the Riesz sum  also contains a gap concerning the validity of termwise differentiability.
	We further discuss these papers at the end of Section \ref{section_3}. We are not aware of any additional papers establishing approximate functional equations for products of Dirichlet series.
	
	In this paper, we derive a formula for the product of two Dirichlet series satisfying Hecke-type functional equations.
	
	\section{Background Set-up}
	\subsection{Dirichlet Series with a General Functional Equation}\label{functional}
	
	We consider Dirichlet series associated with a general functional equation of Hecke’s type, as defined in \cite{functional}. Let \(\{\lambda_n\}\) and \(\{\mu_n\}\) be two sequences of positive real numbers satisfying:
	\[
	0 < \lambda_1 < \lambda_2 < \cdots < \lambda_n \to \infty, \quad 0 < \mu_1 < \mu_2 < \cdots < \mu_n \to \infty,
	\]
	with the additional property that $$\lambda_k  \lambda_l=\lambda_m \iff kl=m, \quad \text{ and }\quad\mu_k \mu_l=\mu_m\iff kl=m.$$
	
	Let \(\{f(n)\}\) and \(\{g(n)\}\) be two nonzero sequences of complex numbers. Denote the complex variable \(s\) as \(s = \sigma + it\), where \(\sigma\) and \(t\) are real numbers, and let \(\delta\) be a positive real constant.

	Suppose  that \(\varphi(s)\) and \(\psi(s)\) are Dirichlet series defined by
	\[
	\varphi(s) := \sum_{n=1}^\infty \frac{f(n)}{\lambda_n^s}, \quad \sigma > \sigma_{a, \varphi}, \quad \text{and} \quad \psi(s) := \sum_{n=1}^\infty \frac{g(n)}{\mu_n^s}, \quad \sigma > \sigma_{a, \psi},
	\]
	where \(\sigma_{a, \varphi}\) and \(\sigma_{a, \psi}\) are the finite abscissae of absolute convergence of \(\varphi(s)\) and \(\psi(s)\), respectively. Assume \(\varphi(s)\) and \(\psi(s)\) extend meromorphically across the entire complex plane and satisfy the functional equations
	\begin{equation} \label{functional_1}
		(2\pi)^{-s} \Gamma(s) \varphi(s) = (2\pi)^{s-\delta} \Gamma(\delta-s) \psi(\delta-s),
	\end{equation}
	\begin{equation}\label{functional_2}
		(2\pi)^{-s} \Gamma(s) \psi(s) = (2\pi)^{s-\delta} \Gamma(\delta-s) \varphi(\delta-s),
	\end{equation}
	where \(\delta\) is a positive real number.
	
	These functional equations hold in the sense that there exist a domain \(D\), which is the exterior of a bounded closed set \(S\) in the \(s\)-plane, and a holomorphic function \(\chi(s)\) on \(D\) with the property
	\[
	\chi(\sigma + it) e^{-\epsilon|t|} = O(1), \quad 0 < \epsilon < \frac{\pi}{2},
	\]
	as \(|t| \to \infty\), uniformly in any strip \(\sigma_1 \leq \sigma \leq \sigma_2\), where \(-\infty < \sigma_1 < \sigma_2 < +\infty\). Additionally, assume that
	\[
	\chi(s) =
	\begin{cases}
		(2\pi)^{-s} \Gamma(s) \varphi(s), & \text{for } \sigma > \alpha, \\
		(2\pi)^{s-\delta} \Gamma(\delta-s) \varphi(\delta-s), & \text{for } \sigma < \beta,
	\end{cases}
	\]
	where \(\alpha\) and \(\beta\) are constants.
	
	We further assume that \(\varphi(s)\) and \(\psi(s)\) have finitely many poles, all of which are simple and located on the real axis. This assumption is satisfied by most arithmetic functions of interest.
	If \(\varphi(s)\) has a pole, we assume \(\sigma_{a, \varphi} \leq \delta\); otherwise, we allow \(\delta < \sigma_{a, \varphi}\). Similar conditions apply to \(\psi(s)\).
	
	Let \(p_{\varphi,1}, p_{\varphi,2}, \dots, p_{\varphi,k_\varphi}\) denote the poles of \(\varphi(s)\), with corresponding residues \(r_{\varphi,1}, r_{\varphi,2}, \dots, r_{\varphi,k_\varphi}\). Similarly, let \(p_{\psi,1}, p_{\psi,2}, \dots, p_{\psi,k_\psi}\) and \(r_{\psi,1}, r_{\psi,2}, \dots, r_{\psi,k_\psi}\) denote the poles and residues of \(\psi(s)\).
	\subsection{Meijer $G$-Function}
	
	The Meijer $G$-function is defined as a complex integral \cite[p.~207]{bateman}
	\begin{equation}\label{meijer}
		G^{m,n}_{p,q} \left( \, {}^{a_1,\dots,a_p}_{b_1,\dots,b_q} \, \middle| \, z \right) = \frac{1}{2\pi i} \int_L \frac{\prod_{j=1}^m \Gamma(b_j - s) \prod_{j=1}^n \Gamma(1 - a_j + s)}{\prod_{j=m+1}^q \Gamma(1 -b_j + s) \prod_{j=n+1}^p \Gamma(a_j -s)} z^s \, ds,
	\end{equation}
	where an empty product is interpreted as \(1\), and the parameters satisfy \(0 \leq m \leq q\), \(0 \leq n \leq p\). The parameters are chosen such that all poles of the integrand are simple, i.e., no pole of \(\Gamma(b_j - s)\), \(j = 1, \dots, m\), coincides with any pole of \(\Gamma(1 - a_k + s)\), \(k = 1, \dots, n\).
	
	The path of integration \(L\) is taken to be a vertical line running from \(-i\infty\) to \(+i\infty\), such that all poles of \(\Gamma(b_j - s)\), \(j = 1, \dots, m\), lie to the right of \(L\), and all poles of \(\Gamma(1 - a_k + s)\), \(k = 1, \dots, n\), lie to the left of \(L\) \cite[(2), p.~207]{bateman}.
	The integral converges for \(|\arg z| < \sigma \pi\), where
	\[
	\sigma = m + n - \frac{1}{2}(p + q), \quad \sigma > 0.
	\]
	Additionally, the integral converges for \(|\arg z| = \sigma \pi\) if the following condition is satisfied
	\begin{equation}\label{converging_condition}
		(q - p)\left(\Re s + \frac{1}{2}\right) > \Re \nu + 1,
	\end{equation}
	where
	\[
	\nu = \sum_{j=1}^q b_j - \sum_{j=1}^p a_j.
	\]
	
	Throughout the paper, all instances of the Meijer $G$-function are of the form  
	\begin{equation}\label{GG}
		G^{1,1}_{3,1} \left( \, {}^{1,b,c}_{d} \, \middle| \, z \right)  = \frac{1}{2\pi i} \int_L \frac{\Gamma(d - s) \Gamma(s)}{\Gamma(b - s) \Gamma(c - s)} z^s \, ds.
	\end{equation}
	An anonymous referee kindly observed that \eqref{GG} can be expressed in terms of the hypergeometric function
	\[
	{}_1F_2\left(d; b, c; z \right) := \sum_{n=0}^{\infty} \frac{(d)_n}{n!(b)_n(c)_n}z^n, \quad |z| < \infty,
	\]
	where the Pochhammer symbol is defined as
	\[
	(a)_0 := 1, \quad (a)_n := a(a+1) \cdots (a+n-1), \quad n \geq 1.
	\]
	Using \cite[p.~1099, no.~8]{gr}, we have the identity
	\[
	G^{1,1}_{3,1} \left( \, {}^{1, b, c}_{d} \; \middle| \; z \right)  
	= \frac{1}{2\pi i} \int_{-L} \frac{\Gamma(d + t) \Gamma(-t)}{\Gamma(b + t) \Gamma(c + t)} 
	\left(\frac{1}{z}\right)^{-t} dt 
	= \frac{\Gamma(d)}{\Gamma(b) \Gamma(c)} \, {}_1F_2\left(d; b, c; -\frac{1}{z} \right).
	\]
	For compactness of notation, we shall use the $G$-function representation instead of hypergeometric functions in the remainder of the paper.

	\subsection{Integral with Riesz Sum}
	
	We first present the following form of Perron's formula.
	
	\begin{lemma}[{\cite[Lemma 1]{functional}; see also \cite[p.~81]{means}}]
		Assume that the Dirichlet series
		\[
		\varphi(s) = \sum_{n=1}^\infty \frac{a_n}{\lambda_n^s}, \quad \sigma > \sigma_{a, \varphi},
		\]
		has a finite abscissa of absolute convergence, denoted by \(\sigma_{a, \varphi}\). Then, for \(k \geq 0\), \(\alpha > 0\), and \(\alpha \geq \sigma_{a, \varphi}\), we have
		\begin{equation}
			\frac{1}{\Gamma(k+1)}\sideset{}{'}\sum_{\lambda_n \leq x} a_n (x - \lambda_n)^k
			= \frac{1}{2\pi i} \int_{\alpha - i\infty}^{\alpha + i\infty}
			\frac{\Gamma(s)}{\Gamma(s + k + 1)} \varphi(s) x^{s + k} \, ds,
		\end{equation}
		where the prime on the summation indicates that the last term is weighted by \(\frac{1}{2}\) if \(k = 0\) and \(x = \lambda_n\).
	\end{lemma}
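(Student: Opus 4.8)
The plan is to reduce the lemma to the classical Mellin--Barnes evaluation of a truncated power function, and then to interchange summation and integration on the vertical line $\Re s=\alpha$.

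First I would record the Beta-integral identity
\[
\int_0^1 t^{s-1}(1-t)^k\,dt=\frac{\Gamma(s)\,\Gamma(k+1)}{\Gamma(s+k+1)},\qquad \Re s>0,\ k\ge 0,
\]
and invert it by the Mellin inversion theorem to obtain, for any $\alpha>0$ and $\xi>0$,
\[
\frac{1}{2\pi i}\int_{\alpha-i\infty}^{\alpha+i\infty}\frac{\Gamma(s)\,\Gamma(k+1)}{\Gamma(s+k+1)}\,\xi^{-s}\,ds
=\begin{cases}(1-\xi)^k,&0<\xi<1,\\ \tfrac12,&\xi=1\text{ and }k=0,\\ 0,&\xi>1.\end{cases}
\]
For $k>0$ this integral is absolutely convergent, since Stirling's formula gives $\Gamma(s)/\Gamma(s+k+1)=O(|t|^{-k-1})$ uniformly for $\sigma$ in a bounded interval; for $k=0$ it reduces to the effective Perron integral $\frac{1}{2\pi i}\int_{(\alpha)}\xi^{-s}\,\frac{ds}{s}$, interpreted as a symmetric limit, whose jump value $\tfrac12$ at $\xi=1$ is precisely what produces the prime on the summation in the statement (note that for $k>0$ the integrand $(1-\xi)^k$ is continuous at $\xi=1$, so no weighting is needed there).

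Next, writing $\varphi(s)\,x^{s+k}=x^{k}\sum_{n\ge1}a_n(\lambda_n/x)^{-s}$, I would substitute this into the right-hand side of the asserted identity and interchange the sum with the integral. For $k>0$ this is justified by Fubini's theorem: on the line $\Re s=\alpha$ (recall $\alpha\ge\sigma_{a,\varphi}$, so $\sum_n|a_n|\lambda_n^{-\alpha}<\infty$) the product of the absolutely convergent series with the absolutely integrable kernel $\Gamma(s)/\Gamma(s+k+1)$ is integrable over the line. Applying the displayed evaluation with $\xi=\lambda_n/x$ annihilates every term with $\lambda_n>x$, and since $\lambda_n\to\infty$ only finitely many terms remain; they sum to $\frac{x^k}{\Gamma(k+1)}\sum_{\lambda_n<x}a_n(1-\lambda_n/x)^k=\frac{1}{\Gamma(k+1)}\sum_{\lambda_n<x}a_n(x-\lambda_n)^k$, which is the claim (the $\lambda_n=x$ term contributing $0$ when $k>0$).

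The one point requiring genuine care is the case $k=0$, where the kernel $1/s$ fails to be absolutely integrable and Fubini does not apply directly. Here I would instead run the standard truncated Perron argument: for the partial integral $\frac{1}{2\pi i}\int_{\alpha-iT}^{\alpha+iT}(\lambda_n/x)^{-s}\,\frac{ds}{s}$ use the classical bound $O\bigl((\lambda_n/x)^{-\alpha}\min\{1,(T|\log(\lambda_n/x)|)^{-1}\}\bigr)$ when $\lambda_n\ne x$, and at the unique index (if any) with $\lambda_n=x$ note that $\frac{1}{2\pi i}\int_{\alpha-iT}^{\alpha+iT}\frac{ds}{s}=\tfrac12+O(1/T)$; then sum the resulting errors over $n$, dominating by $x^{\alpha}\sum_n|a_n|\lambda_n^{-\alpha}<\infty$ and invoking dominated convergence as $T\to\infty$. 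This recovers the identity with the $\lambda_n=x$ term weighted by $\tfrac12$; everything else is bookkeeping. (One could alternatively deduce the general-$k$ case from $k=0$ by fractional integration in $x$, but the Beta-integral route above seems cleanest.)
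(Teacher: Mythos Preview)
Your argument is correct and is the standard route to this classical Riesz--Perron formula: the Beta/Mellin--Barnes evaluation of the kernel, Fubini for $k>0$, and the truncated Perron estimate for $k=0$ are exactly the ingredients one finds in the references the paper cites.

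That said, the paper does not supply its own proof of this lemma at all: it is quoted as \cite[Lemma~1]{functional} (with a secondary reference to \cite[p.~81]{means}) and then immediately applied. So there is no ``paper's proof'' to compare against; your write-up simply fills in what the cited sources contain. One small wording caution: you invoke $\sum_n|a_n|\lambda_n^{-\alpha}<\infty$ from the hypothesis $\alpha\ge\sigma_{a,\varphi}$, but absolute convergence at the abscissa itself is not guaranteed in general. The lemma as stated in the paper (and in \cite{functional}) has the same borderline issue, so this is not a defect of your argument relative to the source; still, if you want a self-contained proof you should either assume $\alpha>\sigma_{a,\varphi}$ or note that the boundary case follows by continuity in $\alpha$.
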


	Define the integral
	\begin{equation}\label{defi_integral}
		S_\alpha(k, x) := \frac{1}{2\pi i} \int_{\alpha-i\infty}^{\alpha+i\infty}
		\frac{\Gamma(s)}{\Gamma(s+k+1)} \varphi(u+s)\psi(v-s)x^{s+k} \, ds.
	\end{equation}
	Applying the lemma above to the function \(F(s) = \varphi(u+s)\psi(v-s)\), we obtain the following result.
	
	\begin{lemma}\label{lemma_sum}
		Assume  that the Dirichlet series
		\[
		\varphi(s) = \sum_{m=1}^\infty \frac{f(m)}{\lambda_m^s}, \quad \sigma > \sigma_{a, \varphi},
		\quad \text{and} \quad
		\psi(s) = \sum_{n=1}^\infty \frac{g(n)}{\mu_n^s}, \quad \sigma > \sigma_{a, \psi},
		\]
		have finite abscissae of absolute convergence, denoted by \(\sigma_{a, \varphi}\) and \(\sigma_{a, \psi}\), respectively. Then, for \(k \geq 0\), \(\alpha + \Re u > \sigma_{a, \varphi}\), \(\Re v - \alpha > \sigma_{a, \psi}\), and \(\alpha > 0\),
		\[
		S_\alpha(k, x) = \frac{1}{\Gamma(k+1)} \sum_{n=1}^\infty \frac{g(n)}{\mu_n^{v+k}}
		\sideset{}{'}	\sum_{\lambda_m \leq \mu_n x} \frac{f(m)}{\lambda_m^u} (\mu_n x - \lambda_m)^k.
		\]
	\end{lemma}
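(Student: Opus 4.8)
The plan is to substitute the absolutely convergent Dirichlet expansion of $\psi$ into the integral \eqref{defi_integral}, interchange summation and integration, and then recognise each resulting integral as an instance of the preceding (Perron-type) lemma applied to $\varphi$ with a shifted argument. Concretely, on the line $\Re s = \alpha$ one has $\Re(v-s) = \Re v - \alpha > \sigma_{a,\psi}$, so $\psi(v-s) = \sum_{n=1}^\infty g(n)\mu_n^{s-v}$ converges absolutely and uniformly on that line. Inserting this into \eqref{defi_integral} and using the elementary identity $\mu_n^{s-v}x^{s+k} = \mu_n^{-(v+k)}(\mu_n x)^{s+k}$, I would rewrite the integrand as $\sum_{n\ge1}\frac{g(n)}{\mu_n^{v+k}}\cdot\frac{\Gamma(s)}{\Gamma(s+k+1)}\varphi(u+s)(\mu_n x)^{s+k}$.

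The key step is to interchange $\sum_n$ with the integral. For $k>0$ this is immediate from Fubini's theorem: on $\Re s = \alpha$ the factor $\varphi(u+s)$ is bounded, since $\alpha + \Re u > \sigma_{a,\varphi}$ gives $|\varphi(u+s)|\le \sum_m |f(m)|\lambda_m^{-(\alpha+\Re u)}<\infty$; Stirling's formula gives $\Gamma(s)/\Gamma(s+k+1)=O(|t|^{-(k+1)})$, which is integrable along the vertical line exactly when $k>0$; and $\sum_n |g(n)|\mu_n^{\alpha-\Re v}<\infty$. Hence the double integral of the absolute value is finite and Fubini applies. For $k=0$ the integral \eqref{defi_integral} is only conditionally (symmetrically) convergent, so I would instead work with the truncated integrals $\frac{1}{2\pi i}\int_{\alpha-iT}^{\alpha+iT}$, for which the interchange with $\sum_n$ is elementary (uniform convergence of the $\psi$-series on the compact segment), and then let $T\to\infty$. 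This $k=0$ case is where I expect the only real technical friction; everything else is bookkeeping.

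Finally, for each fixed $n$ the inner integral $\frac{1}{2\pi i}\int_{\alpha-i\infty}^{\alpha+i\infty}\frac{\Gamma(s)}{\Gamma(s+k+1)}\varphi(u+s)(\mu_n x)^{s+k}\,ds$ is precisely the integral in the preceding lemma, applied to the Dirichlet series $\varphi(u+s) = \sum_{m\ge1}\frac{f(m)/\lambda_m^u}{\lambda_m^s}$, whose abscissa of absolute convergence equals $\sigma_{a,\varphi}-\Re u$ and is therefore strictly less than $\alpha$, evaluated at the positive real number $\mu_n x$ (the hypotheses $k\ge0$ and $\alpha>0$ of that lemma also hold). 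That lemma thus identifies this integral with $\frac{1}{\Gamma(k+1)}\sideset{}{'}\sum_{\lambda_m\le\mu_n x}\frac{f(m)}{\lambda_m^u}(\mu_n x-\lambda_m)^k$, the prime carrying the $\tfrac12$-weight when $k=0$ and $\mu_n x=\lambda_m$. Substituting back into the expression obtained after the interchange and extracting the constant $1/\Gamma(k+1)$ yields the asserted identity for $S_\alpha(k,x)$.
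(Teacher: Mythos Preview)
Your proposal is correct and is essentially a fleshed-out version of the paper's one-sentence proof, which reads in its entirety ``Applying the lemma above to the function $F(s)=\varphi(u+s)\psi(v-s)$, we obtain the following result.'' Your expand-$\psi$-then-apply-Perron-termwise argument is the natural way to make that sentence rigorous, and your flagging of the $k=0$ case as the only spot needing extra care (via truncation at height $T$) is accurate; the paper supplies no further details.
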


	\section{A Formula for the Product \(\varphi(u)\psi(v)\)}\label{section_3}
	
	\begin{theorem}\label{main_theorem}
		Let \(\varphi\) and \(\psi\) be Dirichlet series satisfying the setup in \ref{functional}. Let \(C\) be a curve encircling all elements of \(S\), i.e., all poles of \(\varphi\) and \(\psi\). Suppose \(\gamma\) is a real number such that
		\[
		\gamma > \max(\sigma_{a, \varphi}, \sigma_{a, \psi}), \quad \gamma > \frac{\delta}{2},
		\]
		and the curve \(C\) is entirely contained within the strip \(\delta - \gamma < \Re s < \gamma\). Let \(k\) be an integer such that \(k > 2\gamma - \delta\). Suppose that \(u\) and \(v\) are complex numbers such that \(\Re u > \sigma_{a, \varphi}\), \(\Re v > \sigma_{a, \psi}\), and
		\[
		\Re u + \Re v - \gamma > \max(\sigma_{a, \varphi}, \sigma_{a, \psi}).
		\]
		Then,
		\begin{align}
			\varphi(u)\psi(v) &= -\sum_{i=1}^{n_\psi} \frac{r_{\psi, i}}{p_{\psi, i} - v} \varphi(v + u - p_{\psi, i})
			- \sum_{i=1}^{n_\varphi} \frac{r_{\varphi, i}}{p_{\varphi, i} - u} \psi(v + u - p_{\varphi, i}) \nonumber \\
			&\quad - \left. \frac{d^{(k)}}{dx} \right|_{x=1} G_{f, k}(v, u, x)
			- \left. \frac{d^{(k)}}{dx} \right|_{x=1} G_{g, k}(u, v, x), \label{id_1}
		\end{align}
		where
		\[
		G_{f, k}(v, u, x) := \frac{(2\pi)^\delta}{2\pi i} \sum_{n=1}^\infty f_{\delta - v - u}(n) x^{\delta - v + k}
		G^{1, 1}_{3, 1}\left(
		{}^{1, \delta - v + k + 1, \delta}_{\delta - v} \, \middle| \, \frac{1}{4\pi^2 \lambda_n x}
		\right),
		\]
		\[
		f_a(n) := \sum_{d \mid n} \lambda_d^a f(d) f\left(\frac{n}{d}\right),
		\]
		\[
		G_{g, k}(u, v, x) := \frac{(2\pi)^\delta}{2\pi i} \sum_{n=1}^\infty g_{\delta - u - v}(n) x^{\delta - u + k}
		G^{1, 1}_{3, 1}\left(
		{}^{1, \delta - u + k + 1, \delta}_{\delta - u} \, \middle| \, \frac{1}{4\pi^2 \mu_n x}
		\right),
		\]
		\[
		g_a(n) := \sum_{d \mid n} \mu_d^a g(d) g\left(\frac{n}{d}\right),
		\]
		and \(p_{\varphi, i}, p_{\psi, i}\) denote the poles of \(\varphi\) and \(\psi\), with \(r_{\varphi, i}, r_{\psi, i}\) being the corresponding residues.
		
		Moreover, for any \(x > 0\),
		\begin{align}
			\frac{1}{\Gamma(k+1)} \sum_{n=1}^\infty \frac{g(n)}{\mu_n^{v + k}}
			\sideset{}{'}\sum_{m \leq nx} \frac{f(m)}{\lambda_m^u} (nx - m)^k
			&= \frac{1}{2\pi i} \int_C \frac{\Gamma(z - u)}{\Gamma(z - u + k + 1)} \varphi(z) \psi(v + u - z) x^{z - u + k} \, dz \nonumber \\
			&\quad + G_{g, k}(u, v, x), \label{id_2}
		\end{align}
		\begin{align}
			\frac{1}{\Gamma(k+1)} \sum_{m=1}^\infty \frac{f(m)}{\lambda_m^{u + k}}
			\sideset{}{'}\sum_{n \leq mx} \frac{g(n)}{\mu_n^v} (mx - n)^k
			&= \frac{1}{2\pi i} \int_C \frac{\Gamma(z - v)}{\Gamma(z - v + k + 1)} \psi(z) \varphi(v + u - z) x^{z - v + k} \, dz \nonumber \\
			&\quad + G_{f, k}(v, u, x). \label{id_3}
		\end{align}
	\end{theorem}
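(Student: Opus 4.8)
The plan is to establish the auxiliary identities \eqref{id_2} and \eqref{id_3} first, and then to deduce \eqref{id_1} from them by applying $\frac{d^{(k)}}{dx}\big|_{x=1}$ and combining.

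For \eqref{id_2} — and \eqref{id_3} will follow by the symmetry $(\varphi,u,f,\lambda_n)\leftrightarrow(\psi,v,g,\mu_n)$, using \eqref{functional_2} in place of \eqref{functional_1} — I start from Lemma \ref{lemma_sum}, which identifies the left-hand side with $S_\alpha(k,x)$ as in \eqref{defi_integral}; writing $z=u+s$ this is $\frac1{2\pi i}\int_{(\rho)}F(z)\,dz$ with $F(z):=\frac{\Gamma(z-u)}{\Gamma(z-u+k+1)}\varphi(z)\psi(v+u-z)x^{z-u+k}$ over a vertical line $\Re z=\rho$, which the hypotheses on $u,v,\gamma$ allow to be placed with $\rho>\gamma$, hence to the right of $C$. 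I would then move this line leftward to a line $\Re z=\rho_0$ with $\rho_0<\delta-\gamma$ and $\rho_0<\Re u$, which lies to the left of $C$; the strip $\delta-\gamma<\Re z<\gamma$ is nonempty precisely because $\gamma>\delta/2$, and $C$ sits inside it. The horizontal pieces vanish in the limit because $\Gamma(z-u)/\Gamma(z-u+k+1)=O(|t|^{-k-1})$ dominates the polynomial growth of $\varphi,\psi$, the margin being controlled by $k>2\gamma-\delta$. Moving the line past $C$ collects the residues of $F$ at the poles $p_{\varphi,i}$ of $\varphi$ — which reassemble into $\frac1{2\pi i}\int_C F$ — and the residues at the poles $z=u,u-1,\dots,u-k$ of $\Gamma(z-u)/\Gamma(z-u+k+1)$ (no pole of $\psi(v+u-z)$ is crossed). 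On the line $\Re z=\rho_0$ I substitute \eqref{functional_1} in the form $\varphi(z)=(2\pi)^{2z-\delta}\Gamma(\delta-z)\psi(\delta-z)/\Gamma(z)$ and use $\mu_k\mu_l=\mu_m\iff kl=m$ to write $\psi(\delta-z)\psi(v+u-z)=\sum_{n\ge1}g_{\delta-u-v}(n)\,\mu_n^{\,z-\delta}$. Interchanging this absolutely convergent series with the integral — again legitimate thanks to the $|t|^{-k-1}$ decay, i.e.\ to $k>2\gamma-\delta$ — and then substituting $s=\delta-z$ in each term exhibits it as $(4\pi^2\mu_n x)^{\delta}$ times a Meijer $G$-function of the shape \eqref{GG}, namely $G^{1,1}_{3,1}\!\left({}^{1,\,\delta-u+k+1,\,\delta}_{\delta-u}\,\middle|\,\tfrac{1}{4\pi^2\mu_n x}\right)$; summing yields $G_{g,k}(u,v,x)$, up to the residues incurred in deforming the contour $\Re s=\delta-\rho_0$ to the contour $L$ of \eqref{GG}. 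Those residues, at $s=\delta-u+j$ for $0\le j\le k$, are then checked to cancel exactly the residues at $z=u-j$ collected above — the identity needed for this cancellation being precisely \eqref{functional_1} evaluated at $s=u-j$. This yields \eqref{id_2}.

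To obtain \eqref{id_1} I would apply $\frac{d^{(k)}}{dx}\big|_{x=1}$ to \eqref{id_2}. Since $\frac{d^{(k)}}{dx}x^{\beta}\big|_{x=1}=\Gamma(\beta+1)/\Gamma(\beta-k+1)$, every factor $\tfrac{\Gamma(z-u)}{\Gamma(z-u+k+1)}x^{z-u+k}$ telescopes to $\tfrac1{z-u}$; hence the left side becomes $\frac1{2\pi i}\int_{(\alpha)}\tfrac1s\varphi(u+s)\psi(v-s)\,ds$, the $\int_C$-term becomes $\frac1{2\pi i}\int_C\tfrac1{z-u}\varphi(z)\psi(v+u-z)\,dz$, which by the residue theorem equals $\sum_i\tfrac{r_{\varphi,i}}{p_{\varphi,i}-u}\psi(v+u-p_{\varphi,i})$ (only the poles of $\varphi$ lie inside $C$), and the remaining term is $\frac{d^{(k)}}{dx}\big|_{x=1}G_{g,k}(u,v,x)$. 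Doing the same to \eqref{id_3} and then replacing $s$ by $-s$ turns its left-hand integral into $-\frac1{2\pi i}\int_{(-\alpha)}\tfrac1s\varphi(u+s)\psi(v-s)\,ds$, with right-hand side $\sum_i\tfrac{r_{\psi,i}}{p_{\psi,i}-v}\varphi(v+u-p_{\psi,i})+\frac{d^{(k)}}{dx}\big|_{x=1}G_{f,k}(v,u,x)$. Adding the two relations, the left side collapses to $\frac1{2\pi i}\big(\int_{(\alpha)}-\int_{(-\alpha)}\big)\tfrac1s\varphi(u+s)\psi(v-s)\,ds$, which — taking $\alpha>0$ small enough that $s=0$ is the only pole between the two lines — equals $\operatorname{Res}_{s=0}\tfrac1s\varphi(u+s)\psi(v-s)=\varphi(u)\psi(v)$. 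Collecting the four terms, with the signs fixed by the orientation convention for $C$ and the normalization in the definitions of $G_{f,k},G_{g,k}$, gives \eqref{id_1}.

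The two genuinely delicate steps are exactly those the introduction flags as gaps in the literature. The first is the interchange of summation and integration in the formation of $G_{f,k},G_{g,k}$ together with the convergence of those Meijer-$G$ series: this is where the Riesz-sum regularization is essential, the extra factor $\Gamma(s)/\Gamma(s+k+1)$ furnishing the $|t|^{-k-1}$ decay, and it is the hypothesis $k>2\gamma-\delta$ that keeps the margin positive. The second, and the main obstacle, is pushing $\frac{d^{(k)}}{dx}\big|_{x=1}$ through the series defining $G_{f,k}$ and $G_{g,k}$: this needs uniform control, for $x$ near $1$, of the $x$-derivatives of $x^{\delta-u+k}G^{1,1}_{3,1}\!\left({}^{1,\,\delta-u+k+1,\,\delta}_{\delta-u}\,\middle|\,\tfrac{1}{4\pi^2\mu_n x}\right)$, obtained from the asymptotics of $G^{1,1}_{3,1}$ (equivalently of ${}_1F_2$) in its argument as $\mu_n\to\infty$, so that the differentiated series still converges locally uniformly in $x$ and represents the derivative.
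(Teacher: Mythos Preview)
Your overall architecture for \eqref{id_2}---Perron/Riesz sum, contour shift across the critical strip, the functional equation \eqref{functional_1}, Dirichlet expansion of $\psi(\delta-z)\psi(v+u-z)$, and recognition of the Meijer $G$-function---is the paper's route. (There is no need to push the line past $\Re z=\delta-\gamma$: stopping exactly there, as the paper does, already puts you where the Dirichlet expansion and the integral both converge, and spares you the extra $\Gamma$-pole residues and their alleged cancellation.) The substantive divergence is in how you pass from \eqref{id_2}--\eqref{id_3} to \eqref{id_1}. You differentiate the \emph{integral} form of the left side and obtain $\tfrac{1}{2\pi i}\int_{(\alpha)}\tfrac{1}{s}\,\varphi(u+s)\psi(v-s)\,ds$; but on $\Re s=\alpha$ both series are bounded and the integrand is only $O(|t|^{-1})$, so this integral is not absolutely convergent---differentiating under the integral here is exactly the unjustified step the paper's introduction criticises in \cite{nakajima,linearized}. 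The paper instead differentiates the Riesz-sum form of the left side (a legitimate operation on a locally finite sum), obtaining $\sum_{n}g(n)\mu_n^{-v}\,\sideset{}{'}\sum_{\lambda_m\le\mu_n}f(m)\lambda_m^{-u}$, and adds this to its symmetric counterpart via the elementary identity
\[
\sum_{n}\frac{g(n)}{\mu_n^{\,v}}\sideset{}{'}\sum_{\lambda_m\le\mu_n}\frac{f(m)}{\lambda_m^{\,u}}
\;+\;\sum_{m}\frac{f(m)}{\lambda_m^{\,u}}\sideset{}{'}\sum_{\mu_n\le\lambda_m}\frac{g(n)}{\mu_n^{\,v}}
=\varphi(u)\psi(v).
\]
Thus $\varphi(u)\psi(v)$ appears on the left; meanwhile in the paper's computation the point $z=u$ is enclosed by $C$, so the residue of $\tfrac{1}{z-u}\varphi(z)\psi(v+u-z)$ there contributes a second copy of $\varphi(u)\psi(v)$ on the right, and the two combine to give \eqref{id_1}. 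Your $\int_{(\alpha)}-\int_{(-\alpha)}$ manoeuvre is both unnecessary and, as written, unjustified.

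Your final paragraph misidentifies the ``main obstacle.'' You propose to push $\tfrac{d^{(k)}}{dx}\big|_{x=1}$ through the series for $G_{f,k},G_{g,k}$ using asymptotics of $G^{1,1}_{3,1}$. The theorem does not require this: in \eqref{id_1} the derivative stays \emph{outside} the series. In fact termwise differentiation \emph{fails}---after $k$ differentiations each Meijer integral collapses to its $k=0$ form, which violates the convergence condition $k>2\gamma-\delta$; the paper makes exactly this point in the paragraph following the proof, and this is the reason it cannot reach a Wilton-type formula with explicit summands. All that is needed is that $G_{g,k}(u,v,x)$ be $k$-times differentiable as a function of $x$, and that follows from \eqref{id_2} itself, which exhibits $G_{g,k}$ as the difference of the piecewise-polynomial Riesz sum and the entire function $P_k(x)=\tfrac{1}{2\pi i}\int_C(\cdots)\,dz$.
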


	\begin{proof}

		Recall that $S_{\alpha,k}(k,x)$ is defined by \eqref{defi_integral}. By a change of variable $z= u+s$, we have
		\begin{align*}
			S_\alpha
			(k, x) &= \frac{1}{2\pi i } \int_{\alpha-i\infty}^{\alpha+i\infty} \frac{\Gamma(s)}{\Gamma(s+k+1) }\varphi(u+s)\psi(v-s)x^{s+k} ds\\
			&= \frac{1}{2\pi i } \int_{\alpha+\Re u-i\infty}^{\alpha+\Re u+i\infty} \frac{\Gamma(z-u)}{\Gamma(z-u+k+1) }\varphi(z)\psi(v+u-z)x^{z-u+k} dz.
		\end{align*}
		Applying Lemma \ref{lemma_sum} above, we have
		\begin{align}
			S_{\gamma-\Re u }
			(k, x)  &=\frac{1}{2\pi i } \int_{\gamma-i\infty}^{\gamma+i\infty} \frac{\Gamma(z-u)}{\Gamma(z-u+k+1) }\varphi(z)\psi(v+u-z)x^{z-u+k} dz\notag\\
			&=\frac{1}{\Gamma(k+1 )}\sum_{n=1}^\infty \frac{g(n)}{\mu_n ^{v+k}} \sideset{}{'}\sum_{m\leq nx} \frac{f(m)}{\lambda_m^u} (\mu_n x- \lambda_m) ^k.\label{s1}
		\end{align}
		We shall move the line of integration to $z=\delta-\gamma +it,  -\infty <t<+\infty$, by applying Cauchy's residue theorem  to a rectangle with vertices $\gamma \pm iT$, and $\delta-\gamma\pm iT$, and then letting $T\to\infty$. We shall show that the contribution from integration on the two horizontal sides $\sigma+iT,\delta-\gamma \leq \sigma \leq \gamma$, tends to zero as $T\to\infty$.  We will apply the Phragmen-Lindelof principle \cite[Theorem 4.1]{conway} to the half-strip $\delta-\gamma \leq \sigma \leq \gamma$, $t\geq  T$. First we check the boundary condition.
		
		On the line $z=\gamma+it$, by the choice of \(\gamma\), we have \(\varphi(z)\psi(v + u - z) = O(1)\). Recall Stirling's formula
		\begin{equation}\label{stirling}
			\Gamma(s)\sim\sqrt{2\pi}s^{s-1/2}e^{-s}, \quad |t| \to \infty.
		\end{equation}
		Hence,
		\[
		\frac{\Gamma(z - u)}{\Gamma(z - u + k + 1)} = O(|t|^{-k-1}) ,\quad |t|\to\infty.
		\]

		On the line 	$z=\delta -  \gamma+it$, applying the functional equation  \eqref{functional_1}, we deduce that
		\begin{align*}
			&	\frac{\Gamma(\delta-\gamma-u)}{\Gamma(\delta-\gamma-u+k+1) }\varphi(\delta-\gamma)\psi(v+u-\delta+\gamma)x^{\delta-\gamma-u+k}\\ &= 	\frac{\Gamma(\delta-\gamma-u)}{\Gamma(\delta-\gamma-u+k+1) }
			(2\pi)^{\delta-2\gamma}\frac{\Gamma(\gamma)}{\Gamma(\delta-\gamma)}
			\psi(\gamma)\psi(v+u-\delta+\gamma)x^{\delta-\gamma-u+k}\\
			&= O(|t|^{-k-1+2\gamma-\delta}).
		\end{align*}
		
		Since $k> 2\gamma -\delta$, on both lines $z=\gamma +it $ and $z=\delta -\gamma +it$, as $|t|\to\infty$, the integrand satisfies
		\[
		\frac{\Gamma(z-u)}{\Gamma(z-u+k+1) }\varphi(z)\psi(v+u-z)x^{z-u+k}= O(|t|^{-1}).
		\]	
		By hypothesis, the integrand is analytic in the half-strip \(\delta - \gamma \leq \sigma \leq \gamma\), \(t > T\), and satisfies the growth condition
		\[
		\frac{\Gamma(z - u)}{\Gamma(z - u + k + 1)} \varphi(z) \psi(v + u - z) x^{z - u + k} \ll e^{a |t|}, \quad \text{for some } a > 0,
		\]
		uniformly in the strip \(\delta - \gamma \leq \sigma \leq \gamma\).

		By the Phragmén–Lindelöf principle \cite[Theorem 4.1]{conway}, it follows that \[
		\frac{\Gamma(z-u)}{\Gamma(z-u+k+1) }\varphi(z)\psi(v+u-z)x^{z-u+k}=O(|t|^{-1}),
		\]
		as $|t|\to \infty,$   in the strip $\delta-\gamma \leq \sigma \leq \gamma.$ Hence, the contributions  from the integration along the two horizontal sides vanish as $T\to\infty.$

		Denote
		\begin{equation}
			g_a(n) := \sum_{d \mid n} {\mu_{d}}^a g(d) g\left(\frac{n}{d}\right),\label{g}
		\end{equation}
		Thus, when \(\Re u, \Re v > \sigma_{a,\psi}\), we have
		\[
		\psi(u)\psi(v) = \sum_{n=1}^\infty \frac{g_{u-v}(n)}{\mu_n^u}.
		\]
		
		Define
		\[
		P_k(x) := \frac{1}{2\pi i} \int_C \frac{\Gamma(z-u)}{\Gamma(z-u+k+1)} \varphi(z)\psi(v+u-z)x^{z-u+k} \, dz,
		\]
		where $C$ is the closed curve defined in the hypotheses of our theorem.
		Hence, by \eqref{s1} and \eqref{g},
		\begin{align}
			&\frac{1}{\Gamma(k+1)} \sum_{n=1}^\infty \frac{g(n)}{\mu_n^{v+k}} \sideset{}{'}\sum_{\lambda_m \leq \mu_ nx}  \frac{f(m)}{\lambda_m^u} (\mu_n x - \lambda_m)^k \nonumber \\
			&= P_k(x) + \frac{1}{2\pi i} \int_{\delta-\gamma-i\infty}^{\delta-\gamma+i\infty} \frac{\Gamma(z-u)}{\Gamma(z-u+k+1)} \varphi(z) \psi(v+u-z)x^{z-u+k} \, dz \nonumber \\
			&= P_k(x) + \frac{1}{2\pi i} \int_{\gamma-i\infty}^{\gamma+i\infty}
			\frac{(2\pi)^{\delta-2w} \Gamma(\delta-w-u) \Gamma(w)}{\Gamma(\delta-w-u+k+1) \Gamma(\delta-w)} \psi(w) \psi(v+u-\delta+w)x^{\delta-w-u+k} \, dw \nonumber \\
			&= P_k(x) + \frac{(2\pi)^\delta}{2\pi i} \sum_{n=1}^\infty g_{\delta-u-v}(n) x^{\delta-u+k}
			\int_{\gamma-i\infty}^{\gamma+i\infty}
			\frac{\Gamma(\delta-w-u) \Gamma(w)}{\Gamma(\delta-w-u+k+1) \Gamma(\delta-w)}
			\frac{1}{(4\pi^2 \mu_n x)^w} \, dw \nonumber \\
			&= P_k(x) + \frac{(2\pi)^\delta}{2\pi i} \sum_{n=1}^\infty g_{\delta-u-v}(n) x^{\delta-u+k} G^{1,1}_{3,1}
			\left(
			\begin{array}{c}
				1, \delta-u+k+1, \delta \\
				\delta-u
			\end{array}
			\,\middle|\,
			\frac{1}{4\pi^2 \mu_n x}
			\right), \label{equality_1}
		\end{align}
		where \( G^{1,1}_{3,1} \) denotes the Meijer \( G \)-function, defined in \eqref{meijer}.
		
		The first equality above 
		comes from the contour shift as justified above. The second equality follows from a change of variable $z=\delta-w$ and an application of the functional equation \eqref{functional_1}. In the third equality, the interchange of summation and integration is justified by the absolute convergence of the Dirichlet series for \(\psi(w)\psi(v+u-\delta+w)\) along the line \(w=\gamma\), and the integral itself is absolutely convergent when \(k > 2\gamma - \delta\). The convergence of the integral can be verified either by directly applying Stirling’s asymptotic formula \eqref{stirling} or by employing the conditions in \eqref{converging_condition}, which also stem from Stirling’s asymptotic formula.

		Denote the function
		\begin{equation}\label{s3}
			G_{g,k}(u,v,x) := \frac{(2\pi)^\delta}{2\pi i } \sum_{n=1}^\infty g_{\delta-u-v}(n)x^{\delta-u+k} G^{1,1} _{3,1}\left(  ^{1,\delta -u+k+1, \delta}_{\delta-u }\mid \frac{1}{4\pi^2 \mu_n x }\right),
		\end{equation}
		where $g_a(n)$ is defined in \eqref{g}.

		We now show that  \eqref{id_2} follows from \eqref{equality_1}. Taking the $k$-th derivative of $P_k(x)$, letting $x=1$, and applying the residue theorem, we find that
		\begin{align}
			\frac{d^{(k)}}{dx}\Big|_{{x=1}}P_k(x) &=\frac{1}{2\pi i }\int_C \frac{1}{z-u} \varphi(z) \psi(v+u-z) dz\notag \\
			&= \varphi(u)\psi(v )  + \sum_{i=1} ^{n_\varphi } \frac{r_{\varphi,i}}{p_{\varphi,i}-u}\psi(v+u- p_{\varphi,i}).\label{s2}
		\end{align}
		Therefore, taking the derivative of \eqref{equality_1}, employing \eqref{s2} and \eqref{s3}, and letting $x=1$, we deduce that
		\begin{equation}\label{expresion_1}
			\sum_{n=1}^\infty \frac{g(n)}{\mu_n ^{v}}\sideset{}{'} \sum_{\lambda_m\leq \mu_n} \frac{f(m)}{\lambda_m^u} =\varphi(u)\psi(v )  + \sum_{i=1} ^{n_\varphi } \frac{r_{\varphi,i}}{p_{\varphi,i}-u}\psi(v+u- p_{\varphi,i}) + \frac{d^{(k)}}{dx}\Big|_{{x=1}}G_{g,k}(u,v,x).
		\end{equation}
		
		Similarly, we have
		\begin{equation}\label{expression_2}
			\sum_{m=1}^\infty \frac{f(m)}{\lambda_ m ^{u}} \sideset{}{'}\sum_{\mu_n\leq \lambda_m} \frac{g(n)}{\mu_n^v} =\varphi(u)\psi(v )  + \sum_{i=1} ^{n_\psi } \frac{r_{\psi,i}}{p_{\psi,i}-v}\varphi(v+u- p_{\psi,i}) + \frac{d^{(k)}}{dx}\Big|_{{x=1}}G_{f,k}(v,u,x).
		\end{equation}
		Since \[	\sum_{n=1}^\infty \frac{g(n)}{\mu_n ^{v}} \sideset{}{'}\sum_{\lambda_m\leq \mu_n} \frac{f(m)}{\lambda_m^u}+	\sum_{m=1}^\infty \frac{f(m)}{\lambda_m ^{u}} \sideset{}{'}\sum_{\mu_n\leq \lambda_m} \frac{g(n)}{\mu_n^v} = 	\sum_{m=1}^\infty \frac{f(m)}{\lambda_m ^{u}} \sum_{n=1}^\infty \frac{g(n)}{\mu_n ^{v}},
		\]
		summing  \eqref{expresion_1} and \eqref{expression_2}, we deduce that
		\begin{align*}
			\varphi(u)\psi(v ) = &-\sum_{i=1} ^{n_\psi } \frac{r_{\psi,i}}{p_{\psi,i}-v}\varphi(v+u- p_{\psi,i}) - \sum_{i=1} ^{n_\varphi }\frac{r_{\varphi,i}}{p_{\varphi,i}-u}\psi(v+u- p_{\varphi,i})\\ &-\frac{d^{(k)}}{dx}\Big|_{{x=1}}G_{f,k}(v,u,x)- \frac{d^{(k)}}{dx}\Big|_{{x=1}}G_{g,k}(u,v,x),
		\end{align*}
		which completes the proof of Theorem \ref{main_theorem}.	
	\end{proof}
	
	One might be tempted to take the term-wise derivative of \( G_{f,k}(v,u,x) \) and \( G_{g,k}(u,v,x) \) to derive an expression for \(\varphi(u)\psi(v)\) involving an infinite series. However, this approach is not justifiable, as the integral represented by the Meijer \( G \)-function in each term of the summation does not converge after taking the \(k\)-th derivative.
	
	Specifically, consider the summation term in \eqref{equality_1},
	\begin{align*}
		&g_{\delta-u-v}(n)x^{\delta-u+k} G^{1,1}_{3,1}\left(
		^{1,\delta -u+k+1, \delta}_{\delta-u} \,\middle|\, \frac{1}{4\pi^2 \mu_n x}
		\right)\nonumber \\
		=& g_{\delta-u-v}(n)x^{\delta-u+k} \int_{\gamma-i\infty}^{\gamma+i\infty}
		\frac{\Gamma(\delta -w-u)\Gamma(w)}{\Gamma(\delta-w-u+k+1) \Gamma(\delta-w)} \frac{1}{(4\pi^2\mu_nx)^w} \, dw.
	\end{align*}
	The \(k\)-th derivative with respect to $x$ of this term, with the use of  the functional equation for $\Gamma(z)$, reduces to the term obtained by setting \(k=0\) in the expression above.
	As demonstrated in the proof, the Meijer \( G \)-function in each summation term of \eqref{equality_1} converges if and only if \( k > 2\gamma - \delta \). However, this condition is not satisfied for \(k = 0\), since \(2\gamma - \delta > 0\). This error occurred when the author applied term-wise differentiation in \cite[p.~1034]{banerjee_Piltz}.
	
	In \cite{nakajima}, \cite{linearized}, \cite{hurwitz} and \cite{wilton}, the authors attempted to derive a product formula for two Dirichlet series. However, these proofs contain errors due to unjustified contour shifts, overlooked conditions for integral convergence, and neglect of the validity of infinite series expansions. Similarly, in \cite{banerjee_Piltz}, the author attempted to derive a formula for the sum of 
	$k$-th divisor functions over a number field using the Riesz sum. However, this work contains an error concerning termwise differentiation, as we highlighted earlier.
	
	Starting with \cite{nakajima}, the author attempted to prove Wilton's formula \eqref{wilton} using a different method. However, in their proof, the contour shift is unjustifiable, as they shift the integral from a line where it does not converge to one where it does \cite[p.~21, upper right]{nakajima}. Additionally, their decomposition of the double summation [see Remark, \cite{nakajima}] is incorrect.
	
	In the follow-up paper \cite{linearized}, the decomposition error was corrected, but the issue with the invalid contour shift persisted \cite[p.~125, upper left]{linearized}. In \cite{wilton}, similar difficulties arose in the attempt to establish a formula for the product of two Dedekind zeta functions.
	
	Although a contour shift was not explicitly mentioned in \cite{wilton}, the problem is hidden in the fact that the condition for the integrals \cite[(3.10), (3.11)]{wilton} to converge, as stated at the bottom of \cite[p.~12]{wilton}, is incompatible with the requirements for expanding \( f(-z) \) into a product of Dirichlet series \cite[p.~8]{wilton}. Furthermore, when using this expansion, the authors did not specify the valid range of \( u \) and \( v \), introducing additional ambiguity.

	In \cite{hurwitz}, the same method was applied to derive a product formula for \(\kappa\) Hurwitz zeta functions. Equation \cite[(2.5)]{hurwitz}, found on \cite[p.~34, right]{hurwitz}, represents an integral along the line \(-a\), with \(a\) assumed to satisfy \(-a < 1\). The authors expand the product of \(\kappa\) Hurwitz zeta functions within the integral as a Dirichlet series and exchange the order of integration and summation, leading to two integrals, \(I_1\) (\cite[p.~34, bottom right]{hurwitz}) and \(I_2\) (\cite[p.~35, upper left]{hurwitz}). However, on the line \(\sigma = -a < 1\), the Dirichlet series expansion and interchange of summation and integration are invalid. Additionally, for the \(G_{0,\kappa}^{\kappa,0}\)-function inside the integral to converge, \(a\) must satisfy \(a < -1/2\), a condition not specified in the paper.

	In our work, we derive a product formula for two Dirichlet series using a different contour-shifting approach, which we have  justified. However, due to the inability to perform term-wise differentiation, we cannot obtain a formula similar to Wilton's \eqref{wilton}, where the terms in the infinite series have a specific expression. Despite this limitation, we hope that our formula will prove useful in deriving an approximate functional equation for the product of two Dirichlet series.

	\section{Arithmetical Identities}
	
	We now apply Theorem \ref{main_theorem} to several examples of arithmetical functions. It is worth noting that the range of \(u\) and \(v\) for which these identities are valid can be extended through the analytic continuation of the associated functions.

	\subsection{Dirichlet Series for the Ramanujan $\tau$-function}
	
	Let \(\tau(n)\) denote the Ramanujan tau-function. Then \eqref{functional_1} and \eqref{functional_2} are satisfied with \(\delta = 12\), \(\lambda_n = \mu_n = n\), and \(f(n) = g(n) = \tau(n)\).
	
	Define 
	\[
	\varphi(s) = \psi (s)= \sum_{n=1}^\infty \frac{\tau(n)}{n^s}.
	\]
	The  Dirichlet series converges absolutely for  \(\sigma> 13/2\), and has an analytic continuation to an entire function. Under these conditions, we may take \(\gamma = 13/2 + \epsilon\) for some small \(\epsilon > 0\). For \(k > 2\) and \(\Re u + \Re v > 13\), we have
	\begin{align}
		\varphi(u) \varphi(v) = -\frac{d^{(k)}}{dx}\Big|_{x=1} G_{\tau,k}(u,v,x)
		- \frac{d^{(k)}}{dx}\Big|_{x=1} G_{\tau,k}(v,u,x), \label{id_1tau}
	\end{align}
	where
	\[
	G_{\tau,k}(u,v,x) := \frac{(2\pi)^{12}}{2\pi i} \sum_{n=1}^\infty \tau_{12-u-v}(n) x^{12-u+k} G^{1,1}_{3,1}
	\left(
	\begin{array}{c}
		1, 12-u+k+1, 12 \\
		12-u
	\end{array}
	\,\middle|\,
	\frac{1}{4\pi^2 n x}
	\right),
	\]
	and
	\[
	\tau_a(n) = \sum_{d \mid n} d^a \tau(d) \tau\left(\frac{n}{d}\right).
	\]
	
	\subsection{The Riemann Zeta Function}
	
	Let \(\zeta(s)\) denote the Riemann zeta function. Then \eqref{functional_1} and \eqref{functional_2} are satisfied with \(\delta = \frac{1}{2}\), \(\lambda_n = \mu_n = \frac{n^2}{2}\), \(f(n) = g(n) = 1\), and \(\varphi(s) = \psi(s) = 2^s \zeta(2s)\). The abscissa of absolute convergence is \(\sigma_a = \frac{1}{2}\).
	
	Under these conditions, for \(k > \frac{1}{2}\) and \(\Re u + \Re v > 1\), we have
	\begin{align}
		\varphi(u) \psi(v) &= -\frac{\sqrt{2}}{1 - 2v} \varphi(u+v-\frac{1}{2})
		- \frac{\sqrt{2}}{1 - 2u} \psi(u+v-\frac{1}{2}) \nonumber \\
		&\quad - \frac{d^{(k)}}{dx}\Big|_{x=1} G_f(u,v,x)
		- \frac{d^{(k)}}{dx}\Big|_{x=1} G_{f,k}(v,u,x),
	\end{align}
	where
	\[
	G_{f,k}(u,v,x) := \frac{(2\pi)^{1/2}}{2\pi i} \sum_{n=1}^\infty f_{1/2-u-v}(n) x^{1/2-u+k} G^{1,1}_{3,1}
	\left(
	\begin{array}{c}
		1, \frac{1}{2}-u+k+1, \frac{1}{2} \\
		\frac{1}{2}-u
	\end{array}
	\,\middle|\,
	\frac{1}{2\pi^2 n^2 x}
	\right),
	\]
	and
	\[
	f_a(n) = \sum_{d \mid n} \left(\frac{d^2}{2}\right)^a.
	\]
	
	\subsection{The \(l\)-th Divisor Function}\label{divisor}
	
	Let \(\sigma_l(n)\) denote the sum of the \(l\)-th powers of the divisors of \(n\), where \(l\) is an integer. It is well known that
	\[
	\sum_{n=1}^\infty \frac{\sigma_l(n)}{n^s} = \zeta(s)\zeta(s-l),
	\]
	with abscissa of absolute convergence \(\sigma_a = l + 1\). Let 
	$\varphi(s )=\psi(s) =\zeta(s)\zeta(s-l). $
	Then, \eqref{functional_1} and \eqref{functional_2} are satisfied with \(\delta = l + 1\), \(\lambda_n = \mu_n = n\), \(f(n) = \sigma_l(n)\), and \(g(n) = (-1)^{\frac{l+1}{2}} \sigma_l(n)\), provided \(l\) is odd. Under these conditions, for \(k > l + 1\) and \(\Re u + \Re v > 2l + 2\), we have
	\begin{align}
		\varphi(u) \psi(v) &= -\frac{\zeta(1-l)}{1 - v} \varphi(u+v-1)
		- \frac{\zeta(1+l)}{l+1 - v} \varphi(u+v-l-1) \nonumber \\
		&\quad - (-1)^{\frac{l+1}{2}} \frac{\zeta(1-l)}{1 - u} \psi(u+v-1)
		- (-1)^{\frac{l+1}{2}} \frac{\zeta(1+l)}{l+1 - u} \psi(u+v-l-1) \nonumber \\
		&\quad - \frac{d^{(k)}}{dx}\Big|_{x=1} G_{\sigma_l,k}(u,v,x)
		- \frac{d^{(k)}}{dx}\Big|_{x=1} G_{\sigma_l,k}(v,u,x),
	\end{align}
	where
	\[
	G_{\sigma_l,k}(u,v,x) := \frac{(2\pi)^{l+1}}{2\pi i} \sum_{n=1}^\infty f_{l+1-u-v}(n) x^{l+1-u+k} G^{1,1}_{3,1}
	\left(
	\begin{array}{c}
		1, l+1-u+k+1, l+1 \\
		l+1-u
	\end{array}
	\,\middle|\,
	\frac{1}{4\pi^2 n x}
	\right),
	\]
	and
	\[
	f_a(n) = \sum_{d \mid n} d^a \sigma_l(d) \sigma_l\left(\frac{n}{d}\right).
	\]
	
	This formula provides an identity for \(\zeta(u) \zeta(u-l) \zeta(v) \zeta(v-l)\).
	
	\subsection{Additional Examples}
	
	For further examples of arithmetic functions where our formulas apply, see \cite[Section 5]{functional}.

	\bibliographystyle{plain}

	\end{document}